\newtheorem{thm}{Theorem}
\newtheorem{lemma}{Lemma}
\newtheorem{conj}{Conjecture}
\newcommand{\red}{\text{red}}
\newcommand{\weight}{\text{weight}}
\begin{document}

\title{Approaches for enumerating permutations with a prescribed number of occurrences of patterns}

\author{Brian Nakamura\thanks{Mathematics Department, Rutgers University-New Brunswick, Piscataway, NJ, USA. [bnaka@math.rutgers.edu]}}

\date{}

\maketitle

\begin{abstract}
	In recent work, Zeilberger and the author used a functional equations approach for enumerating permutations with $r$ occurrences of the pattern $1 2 \ldots k$. In particular, the approach yielded a polynomial-time enumeration algorithm for any fixed $r \geq 0$. We extend that approach to patterns of the form $1 2 \ldots (k-2) (k) (k-1)$ by deriving analogous functional equations and using them to develop similar algorithms that enumerate permutations with $r$ occurrences of the pattern. We also generalize those techniques to handle patterns of the form $2 3 \ldots k 1$ and derive analogous functional equations and enumeration algorithms. Finally, we show how the functional equations and algorithms can be modified to track inversions as well as handle multiple patterns simultaneously. This paper is accompanied by Maple packages that implement the algorithms described.
\end{abstract}

\section{Introduction}

Let $\sigma = \sigma_{1} \ldots \sigma_{k}$ be a sequence of $k$ distinct positive integers. We define the \emph{reduction} $\text{red}(\sigma)$ to be the length $k$ permutation $\tau = \tau_{1} \ldots \tau_{k}$ that is order-isomorphic to $\sigma$ (i.e., $\sigma_{i} < \sigma_{j}$ if and only if $\tau_{i} < \tau_{j}$ for every $i$ and $j$). Given a (permutation) pattern $\tau \in \mathcal{S}_{k}$, we say that a permutation $\pi = \pi_{1} \ldots \pi_{n}$ \emph{contains} the pattern $\tau$ if there exists $1 \leq i_{1} < i_{2} < \ldots < i_{k} \leq n$ such that $\text{red}(\pi_{i_{1}} \pi_{i_{2}} \ldots \pi_{i_{k}}) = \tau$, in which case we call $\pi_{i_{1}} \pi_{i_{2}} \ldots \pi_{i_{k}}$ an \emph{occurrence} of $\tau$. We will define $N_{\tau}(\pi)$ to be the number of occurrences of $\tau$ in $\pi$. For example, if the pattern $\tau = 1 2 3$, the permutation $5 3 4 1 2$ avoids the pattern $\tau$ (so $N_{1 2 3}(5 3 4 1 2) = 0$), whereas the permutation $5 2 1 3 4$ contains two occurrences of $\tau$ (so $N_{1 2 3}(5 2 1 3 4) = 2$).\\

For a pattern $\tau$ and non-negative integer $r \geq 0$, we define the set
\begin{align*}
	\mathcal{S}_{n}(\tau,r) := \left\{ \pi \in \mathcal{S}_{n} : \pi \text{ has exactly } r \text{ occurrences of the pattern } \tau \right\}
\end{align*}
and also define $s_{n}(\tau,r) := \left| \mathcal{S}_{n}(\tau,r) \right|$. The corresponding generating function is defined as
\begin{align*}
	F_{\tau}^{r}(x) := \mathop{\sum} \limits_{n=0}^{\infty} {s_{n}(\tau,r) x^{n}}.
\end{align*}
Observe that the classical pattern avoidance problem corresponds to the case where $r=0$ and has been well studied. In this setting, $\mathcal{S}_{n}(\tau,0)$ is known to be enumerated by the Catalan numbers if $\tau \in \mathcal{S}_{3}$ \cite{knuth}. However, much is still unknown even for length $4$ patterns. For example, permutations avoiding the pattern $1 3 2 4$ have been notoriously difficult to enumerate. Precise asymptotics are not even known, although B{\'o}na recently gave an improved upper bound for the growth rate (in \cite{bona:ub1324}) by modifying the approach used by Claesson, Jel{\'{\i}}nek, and Steingr{\'{\i}}msson~\cite{CJS:ub1324}. The survey paper by Kitaev and Mansour~\cite{kitman:survey} provides an extensive overview of work in this area as well as related problems in permutation patterns.\\

While the more general problem (where $r \geq 0$) has also been studied, the work has usually been restricted to small patterns (usually length three) and small $r$. In~\cite{noonan}, Noonan studied permutations containing exactly one occurrence of $1 2 3$ and proved that $s_{n}(1 2 3, 1) = \frac{3}{n} {2 n \choose n-3}$. Burstein recently gave a short combinatorial proof for the result~\cite{bur:f321}. In~\cite{noonzeil:forbid}, Noonan and Zeilberger presented an approach using functional equations to enumerate $s_{n}(\tau, r)$ for small $r$ and for the patterns $1 2 3$, $3 1 2$, and $1 2 3 4$. Subsequent work has been done by B{\'o}na~\cite{bona:f132, bona:prec132}, Fulmek~\cite{fulmek}, Mansour and Vainshtein~\cite{manvain}, Callan~\cite{callan}, and many others. Many of these focused on finding $F_{\tau}^{r}(x)$ for $\tau \in \mathcal{S}_{3}$ and for small $r$.\\

One difficulty arising from the initial Noonan-Zeilberger functional equation approach in~\cite{noonzeil:forbid} was that the approach became very complicated for even $r=2$. In addition, there are many patterns that this approach does not readily extend to. One such pattern (explicitly mentioned in~\cite{noonzeil:forbid}) was $1 4 3 2$. A modified approach was recently presented in~\cite{nz:gwilf} for the case of increasing patterns. Given a fixed $r \geq 0$, the resulting enumeration algorithm for computing $s_{n}(1 2 \ldots k,r)$ was polynomial-time (in $n$). This, in a sense, tackled the first difficulty from~\cite{noonzeil:forbid} and allowed us to enumerate the sequence $s_{n}(1 2 \ldots k, r)$ for even larger fixed $r$.\\

In this paper, we extend the enumeration techniques in~\cite{nz:gwilf} to new families of patterns (including the pattern $1 4 3 2$) as well as multiple patterns. It should be noted that this general approach is different from the \emph{enumeration schemes} approach pioneered by Zeilberger~\cite{zeil:wilf} and extended by Vatter~\cite{vv:wilfp}, Pudwell~\cite{pud:barsch, baxpud:vincsch}, and Baxter~\cite{baxter:inv, baxpud:vincsch}. The enumeration schemes approach is useful for enumerating pattern-avoiding permutations (the $r=0$ case) but does not appear to be readily adaptable to the generalized setting for permutations with $r>0$ occurrences of a pattern.\\

The paper is organized in the following manner. Section~\ref{incrT} extends the approach in~\cite{nz:gwilf} to the patterns $1 3 2$, $1 2 4 3$, $1 2 3 5 4$, and so on. Section~\ref{p2k1} generalizes the techniques used in~\cite{nz:gwilf} and applies them to the patterns $2 3 1$, $2 3 4 1$ (which is equivalent to $1 4 3 2$), and so on. Section~\ref{multipatt} extends this approach to handle multiple patterns simultaneously as well as refining by the inversion number\footnote{This is technically the same as tracking the number of $2 1$ patterns that occur.}. Section~\ref{concl} lists some possible future work as well as some conjectures. The enumeration algorithms developed in this paper are implemented in the Maple packages {\tt FINCR}, {\tt FINCRT}, {\tt F231}, {\tt F2341}, {\tt F123n132}, {\tt F1234n1243}, and {\tt FS3}. They are all available from the author's website.

\section{Counting occurrences of the pattern $1 2 \ldots (k-2) (k) (k-1)$}\label{incrT}

In this section, we adapt the approach in~\cite{nz:gwilf} (for increasing patterns) to the patterns $1 2 \ldots (k-2) (k) (k-1)$. We first handle the case of $1 3 2$ in full detail and then outline how to generalize this approach to patterns $1 2 4 3$, $1 2 3 5 4$, and so on.

\subsection{Permutations containing $1 3 2$}

Given a (fixed) pattern $\tau$ and non-negative integer $n$, we define the polynomial
\begin{align}
	f_{n}(t) &:= \mathop{\sum} \limits_{\pi \in \mathcal{S}_{n}} {t^{N_{\tau}(\pi)}}.
\end{align}
Observe that the coefficient of $t^{r}$ in $f_{n}(t)$ is exactly equal to $s_{n}(\tau,r)$. For a fixed pattern $\tau$ and fixed $r \geq 0$, our goal is to quickly compute $s_{n}(\tau,r)$. In the remainder of this section, we will assume that $\tau = 1 3 2$.\\

In addition to the variable $t$, we introduce the catalytic variables $x_{1}, \ldots, x_{n}$ and define the weight of a length $n$ permutation $\pi = \pi_{1} \ldots \pi_{n}$ to be
\begin{align*}
	\text{weight}_{132}(\pi) := t^{N_{132}(\pi)} \mathop{\prod} \limits_{i=1}^{n} {x_{i}^{\# \{ (a,b) \; : \; \pi_{a} > \pi_{b} = i,\; 1 \leq a < b \leq n\}}}.
\end{align*}
In general, this will be written more simply as $\text{weight}(\pi)$ when the fixed pattern is clear from context (in this case $1 3 2$). For example, $\text{weight}(1 2 3 4 5) = 1$, $\text{weight}(1 3 2 4 5) = t x_{2}$, and $\text{weight}(2 5 1 4 3) = t^{4} x_{1}^{2} x_{3}^{2} x_{4}$. In essence, the weight of a permutation encodes the number of $1 3 2$ patterns as well as information on $2 1$ patterns (which may become the ``$3 2$'' of a $1 3 2$ pattern if a term is inserted at the beginning of the permutation).\\

For each $n$, we define the polynomial
\begin{align*}
	P_{n}(t; x_{1}, \ldots, x_{n}) := \mathop{\sum} \limits_{\pi \in \mathcal{S}_{n}} {\text{weight}(\pi)}.
\end{align*}
Observe that $P_{n}$ is essentially a generalized multi-variate polynomial for $f_{n}$ and in particular, $P_{n}(t; 1, \ldots, 1) = f_{n}(t)$. We now get the following:\\

\begin{lemma}
	\label{lemma132}
	Let $\pi = \pi_{1} \ldots \pi_{n}$ and suppose that $\pi_{1} = i$. If $\pi^{\prime} := \red(\pi_{2} \ldots \pi_{n})$, then
	\begin{align*}
		\weight(\pi) = x_{1} x_{2} \ldots x_{i-1} \cdot \weight(\pi^{\prime}) \vert_{x_{i} \rightarrow t x_{i+1} \; , \; x_{i+1} \rightarrow t x_{i+2} \; , \; \ldots \; , \; x_{n-1} \rightarrow t x_{n}}. 
	\end{align*}
\end{lemma}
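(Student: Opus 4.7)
The plan is to compute $\weight(\pi)$ directly, decomposing both the $t$-exponent and each $x_j$-exponent according to whether the index $a=1$ (i.e.\ involves $\pi_1 = i$) or $a > 1$ (involves only the tail $\pi_2 \ldots \pi_n$), and then match this term-by-term against the right-hand side.

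First I would set up the bookkeeping. Write $a_j(\sigma) := \#\{(a,b) : \sigma_a > \sigma_b = j,\ a < b\}$, so that $\weight(\sigma) = t^{N_{132}(\sigma)} \prod_j x_j^{a_j(\sigma)}$. The key observation is that under the reduction $\pi_2 \ldots \pi_n \mapsto \pi'$, values less than $i$ are preserved while values greater than $i$ are shifted down by $1$. I would then split a $132$-pattern of $\pi$ into those that use $\pi_1$ and those that do not. Patterns not using $\pi_1$ biject with $132$-patterns of $\pi'$, contributing $N_{132}(\pi')$. Patterns using $\pi_1$ must assign it the role of ``$1$'', so they correspond to pairs $1 < a < b$ with $\pi_a > \pi_b > i$; under the reduction, these are precisely the inversions of $\pi'$ whose smaller element is $\geq i$, giving $\sum_{j' \geq i} a_{j'}(\pi')$. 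This is exactly the extra $t$-exponent produced on the right-hand side by the substitution $x_{j'} \to t x_{j'+1}$ for $j' \geq i$.

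Next I would analyze the $x_j$-exponents of $\weight(\pi)$ one value of $j$ at a time. For $j = i$, there can be no inversion with $\pi_b = i = \pi_1$ because $b$ would have to be $1$, so $a_i(\pi) = 0$, matching the absence of $x_i$ on the right. For $j < i$, the pair $(1,b)$ where $\pi_b = j$ contributes exactly one inversion (since $\pi_1 = i > j$), and inversions with $a > 1$ come from inversions in $\pi'$ with smaller element $j$; hence $a_j(\pi) = 1 + a_j(\pi')$, which is exactly what the prefactor $x_1 x_2 \ldots x_{i-1}$ provides on top of the unchanged $x_j^{a_j(\pi')}$ factor. For $j > i$, no $a=1$ inversion exists (since $\pi_1 = i < j$), and inversions with $a > 1$ correspond under reduction to inversions of $\pi'$ with smaller element $j - 1$; so $a_j(\pi) = a_{j-1}(\pi')$, which is exactly what the substitution $x_{j-1} \to t x_j$ produces for the $x$-part.

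The main obstacle, such as it is, is keeping the index shift straight: one must remember that a value $v > i$ in the tail of $\pi$ becomes $v - 1$ in $\pi'$, which is why the substitution bumps indices upward rather than leaves them alone. Once the three cases $j < i$, $j = i$, $j > i$ are tabulated and the $t$-contribution from patterns anchored at $\pi_1$ is identified with the $t$-factors collected by the substitution, the two expressions agree monomial-by-monomial and the lemma follows.
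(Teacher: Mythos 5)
Your proof is correct and follows essentially the same route as the paper's: decompose $N_{132}(\pi)$ into patterns avoiding versus using $\pi_1$ (the latter being inversions of the tail with smaller element exceeding $i$, captured by the $t$ in $x_j \rightarrow t x_{j+1}$), account for the prefactor $x_1 \cdots x_{i-1}$ via the new inversions created by $\pi_1 = i$, and track the index shift under reduction. Your version merely spells out the three-case bookkeeping on the $x_j$-exponents that the paper leaves implicit.
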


\begin{proof}
	We assume $i$ to be a fixed value and will compute $\weight(\pi)$ from $\weight(\pi^{\prime})$. We re-insert $i$ at the beginning of $\pi^{\prime}$ by shifting all the terms $i, i+1, \ldots , n-1$ up by $1$ (i.e., $x_{j} \rightarrow x_{j+1}$ for $j \geq i$). The new ``$i$'' would create new $2 1$ patterns and would require an extra factor of $x_{1} x_{2} \ldots x_{i-1}$ for the weight. Also, observe that $N_{1 3 2}(\pi)$ is equal to the number of occurrences of $1 3 2$ in $\pi^{\prime}$ \textbf{plus} the number of occurrences of $2 1$ in $\pi_{2} \ldots \pi_{n}$, where the term corresponding to the ``$1$'' is larger than $i$. Therefore, our $x_{j}$ shift now becomes $x_{j} \rightarrow t x_{j+1}$ for $j \geq i$.
\end{proof}

This directly leads to the functional equation:\\

\begin{thm}
	For the pattern $\tau = 1 3 2$, 
	\begin{align}
		P_{n}(t; x_{1}, \ldots, x_{n}) = \mathop{\sum} \limits_{i=1}^{n} {x_{1} x_{2} \ldots x_{i-1} \cdot P_{n-1}(t; x_{1}, \ldots, x_{i-1}, t x_{i+1}, \ldots, t x_{n})}. \tag{FE132} \label{FE132}
	\end{align}
\end{thm}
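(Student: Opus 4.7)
The plan is to prove (FE132) as a direct consequence of Lemma~\ref{lemma132} by partitioning $\mathcal{S}_n$ according to the value of the first entry. First, I would rewrite the defining sum for $P_n$ as
$$P_n(t; x_1, \ldots, x_n) = \sum_{i=1}^n \sum_{\substack{\pi \in \mathcal{S}_n \\ \pi_1 = i}} \weight(\pi),$$
splitting on the $n$ possible values of $\pi_1$.

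Next, for each fixed $i$, I would invoke Lemma~\ref{lemma132} to replace $\weight(\pi)$ by $x_1 \cdots x_{i-1} \cdot \weight(\pi')\vert_{x_j \to tx_{j+1},\, j \geq i}$, where $\pi' = \red(\pi_2 \ldots \pi_n)$. The key observation is that the map $\pi \mapsto \pi'$ is a bijection between $\{\pi \in \mathcal{S}_n : \pi_1 = i\}$ and $\mathcal{S}_{n-1}$ (the inverse shifts all entries of $\pi'$ that are $\geq i$ up by one and prepends $i$). Since the prefactor $x_1 \cdots x_{i-1}$ and the substitution $x_j \to tx_{j+1}$ depend only on $i$ and not on the particular $\pi'$, they can be pulled outside the sum over $\pi'$. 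The remaining inner sum is exactly $P_{n-1}(t; x_1, \ldots, x_{n-1})$ by definition, so after substitution it becomes $P_{n-1}(t; x_1, \ldots, x_{i-1}, tx_{i+1}, \ldots, tx_n)$. Summing over $i$ gives (FE132).

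I do not expect any substantive obstacle here, since the theorem is essentially a repackaging of Lemma~\ref{lemma132} in terms of generating polynomials. The one place to be careful is the index bookkeeping for the substitution: one must check that applying $x_j \to tx_{j+1}$ for $j = i, i+1, \ldots, n-1$ to the $(n-1)$-variable polynomial $P_{n-1}(t; x_1, \ldots, x_{n-1})$ transforms its argument list into precisely $(x_1, \ldots, x_{i-1}, tx_{i+1}, \ldots, tx_n)$, so that subscripts $1, \ldots, i-1$ are untouched while positions $i, i+1, \ldots, n-1$ get replaced by $tx_{i+1}, tx_{i+2}, \ldots, tx_n$ respectively. Once this bookkeeping is verified, the theorem follows immediately.
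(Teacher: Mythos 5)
Your proposal is correct and matches the paper's intent exactly: the paper gives no separate proof, stating only that Lemma~\ref{lemma132} ``directly leads to'' (FE132), and your argument---splitting the sum over $\mathcal{S}_n$ by the value of $\pi_1$, applying the lemma, and using the bijection $\pi \mapsto \pi'$ onto $\mathcal{S}_{n-1}$---is precisely that direct derivation, with the index bookkeeping checked correctly.
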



Once $P_{n}(t; x_{1}, \ldots, x_{n})$ is computed, the catalytic variables $x_{1}, \ldots, x_{n}$ can all be set to $1$ to get $f_{n}(t) = P_{n}(t; 1, \ldots, 1)$. However, it is not necessary to compute $P_{n}(t; x_{1}, \ldots, x_{n})$ in its entirety prior to setting the catalytic variables to $1$. Observe that by (\ref{FE132}), we have:
\begin{align*}
	P_{n}(t; 1, \ldots, 1) = \mathop{\sum} \limits_{i=1}^{n} {P_{n-1}(t; 1 \; [i-1 \; \; times], t \; [n-i \; \; times])}.
\end{align*}
We get terms of the form $P_{a_{0}+a_{1}}(t; 1 \; [a_{0} \; times], t \; [a_{1} \; times])$ in the summation, which can again be plugged into (\ref{FE132}) to get:
\begin{align*}
	P_{a_{0}+a_{1}}(t; 1 \; [a_{0} \; times], t \; [a_{1} \; times]) = \mathop{\sum} \limits_{i=1}^{a_{0}} {P_{a_{0}+a_{1}-1}(1 \; \left[i-1 \; \; times \right], t \; \left[a_{0}-i \; \; times \right], t^{2} \; \left[a_{1} \; \; times \right])}\\
	+ \mathop{\sum} \limits_{i=1}^{a_{1}} {t^{i-1} P_{a_{0}+a_{1}-1}(1 \; \left[a_{0} \; \; times \right], t \; \left[i-1 \; \; times \right], t^{2} \; \left[a_{1}-i \; \; times \right])}
\end{align*}
Now, we must deal with terms of the form $P_{a_{0}+a_{1}+a_{2}}(t; 1 \; [a_{0} \; times], t \; [a_{1} \; times], t^{2} \; [a_{2} \; times])$. We can continue this recursive process of plugging new terms into (\ref{FE132}) to eventually compute $f_{n}(t) = P_{n}(t; 1 \; [n \; \; times])$. This is much faster than the direct weighted counting of all $n!$ permutations, although it is still unfortunately an exponential-time (and memory) algorithm.\\

This algorithm has been implemented in the procedure {\tt F132full(n,t)} (in the Maple package {\tt FINCRT}). For example, the Maple call {\tt F132full(8,t);} computes $f_{8}(t)$ and outputs:
\begin{gather*}
	{t}^{31}+7\,{t}^{30}+20\,{t}^{28}+37\,{t}^{27}+41\,{t}^{26}+109\,{t}^{25}+162\,{t}^{24}+169\,{t}^{23}+322\,{t}^{22}+397\,{t}^{21}+647\,{t}^{20}+730\,{t}^{19}\\
	+1048\,{t}^{18}+1152\,{t}^{17}+1417\,{t}^{16}+1576\,{t}^{15}+1770\,{t}^{14}+1853\,{t}^{13}+2321\,{t}^{12}+2088\,{t}^{11}+2620\,{t}^{10}\\
	+2401\,{t}^{9}+2682\,{t}^{8}+2489\,{t}^{7}+2858\,{t}^{6}+2225\,{t}^{5}+2593\,{t}^{4}+1918\,{t}^{3}+1950\,{t}^{2}+1287\,t+1430
\end{gather*}

Suppose that for a small fixed $r \geq 0$, we wanted the first $20$ terms of the sequence $s_{n}(1 3 2, r)$. By this functional equation approach, one would compute $f_{n}(t)$ and extract the coefficient of $t^{r}$ for each $n$ up to $20$. This approach would expend quite a bit of computational effort in generating unnecessary information (namely, all the $t^{k}$ terms where $k > r$). This issue can mostly be circumvented, however, by a couple of observations. The first is the following lemma from~\cite{nz:gwilf}:
\begin{lemma}
\label{crucial}
	Let $n = a_{0} + a_{1} + \ldots + a_{s}$ (where $a_{i} \geq 0$ for each $i$) and suppose $s > r + 1$. Then, the coefficients of $t^{0}, t^{1}, \ldots, t^{r}$ in
	\begin{gather*}
		P_{n}(t; 1 \left[a_{0} \text{ times} \right], \ldots, t^{s-1} \left[a_{s-1} \text{ times} \right], t^{s} \left[a_{s} \text{ times} \right])\\
		- P_{n}(t; 1 \left[a_{0} \text{ times} \right], \ldots, t^{r} \left[a_{r} \text{ times} \right],  t^{r+1} \left[a_{r+1}+a_{r+2}+\ldots+a_{s} \text{ times} \right])
	\end{gather*}
	all vanish.
\end{lemma}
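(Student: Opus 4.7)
My plan is to prove this permutation-by-permutation, exploiting the fact that $\weight(\pi)$ is a \emph{single} monomial in the catalytic variables. Expanding $P_n = \sum_{\pi \in \mathcal{S}_n} \weight(\pi)$, each permutation contributes a term $t^{N_{132}(\pi)} \prod_{j=1}^{n} x_j^{e_j(\pi)}$, where $e_j(\pi)$ records the number of $2 1$ patterns in $\pi$ whose smaller entry has value $j$. Under either substitution this monomial collapses to a single power of $t$, so it suffices to compare the two resulting $t$-degrees for each $\pi$ and show the difference lives in degrees $> r$.

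To keep the bookkeeping clean, I would partition the indices $1,\ldots,n$ into consecutive blocks $A_0, A_1, \ldots, A_s$ of sizes $a_0, a_1, \ldots, a_s$, and set $E_i(\pi) := \sum_{j \in A_i} e_j(\pi)$. The first substitution sends $x_j \mapsto t^i$ for $j \in A_i$, so $\pi$ contributes $t^{d_1(\pi)}$ with
\[
  d_1(\pi) = N_{132}(\pi) + \sum_{i=0}^{s} i \cdot E_i(\pi),
\]
while the second substitution collapses blocks $A_{r+1},\ldots,A_s$ to weight $t^{r+1}$, giving
\[
  d_2(\pi) = N_{132}(\pi) + \sum_{i=0}^{r} i \cdot E_i(\pi) + (r+1)\sum_{i=r+1}^{s} E_i(\pi).
\]

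The key step is a dichotomy on the quantity $E_{>r}(\pi) := \sum_{i>r} E_i(\pi)$. If $E_{>r}(\pi) = 0$, then trivially $d_1(\pi) = d_2(\pi)$ and the two contributions of $\pi$ cancel in the difference. Otherwise $E_{>r}(\pi) \geq 1$, and since every $i$ appearing with positive coefficient in the tail sums satisfies $i \geq r+1$, we get $d_1(\pi) \geq r+1$ and $d_2(\pi) \geq r+1$ immediately; hence $\pi$'s contribution to the difference is supported in $t$-degrees at least $r+1$. Summing over all $\pi \in \mathcal{S}_n$, no monomial $t^0, t^1, \ldots, t^r$ survives in the difference.

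There is no genuine obstacle here beyond the notational setup; the hypothesis $s > r+1$ is used only to ensure that the two arguments to $P_n$ are actually different expressions (for $s \leq r+1$ the ``collapse'' is a no-op and the statement is vacuous). The conceptual content is simply that the catalytic variable $x_j$ only tracks inversions whose small entry is the value $j$, so raising its substituted power from $t^i$ ($i > r$) to $t^{r+1}$ can only shift contributions upward in $t$-degree, and by at least one unit whenever the variable is actually used.
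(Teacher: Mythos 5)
Your proof is correct and is essentially the fully spelled-out version of the paper's one-line argument (``$P_n$ is a multi-variate polynomial''): each monomial of $P_n$ either involves no catalytic variable from a block of index $> r$, in which case the two substitutions agree on it, or it does, in which case both substitutions push it to $t$-degree at least $r+1$. The dichotomy on $E_{>r}(\pi)$ and the nonnegativity of the exponents are exactly the implicit content of the paper's proof, so no further comparison is needed.
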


\begin{proof}
	The more general function $P_{n}(t; x_{1}, \ldots, x_{n})$ is a multi-variate polynomial.
\end{proof}
\noindent This lemma allows us to collapse all the higher powers of $t$ into the $t^{r+1}$ coefficient and allows us to consider objects of the form $P_{n}(t; 1 \left[a_{0} \text{ times} \right], \ldots, t^{r} \left[a_{r} \text{ times} \right],  t^{r+1} \left[a_{r+1} \text{ times} \right])$ regardless of how large $n$ is.\\

Let $n := a_{0} + a_{1} + \ldots + a_{r+1}$. Also, for any expression $R$ and positive integer $k$, let $R\$k$ denote $R \left[k \; \; times \right]$. For example, $t^{3}\$4$ is shorthand for $t^{3},t^{3},t^{3},t^{3}$. Now for any polynomial $p(t)$ in the variable $t$, let $p^{(r)}(t)$ denote the polynomial of degree (at most) $r$ obtained by discarding all powers of $t$ larger than $r$. Also, define the operator $\text{CHOP}_{r}$ by $\text{CHOP}_{r}[p(t)] := p^{(r)}(t)$. \\

An application of (\ref{FE132}) and $\text{CHOP}_{r}$ to $P_{n}^{(r)}(t; 1 \left[a_{0} \text{ times} \right], \ldots, t^{r} \left[a_{r} \text{ times} \right],  t^{r+1} \left[a_{r+1} \text{ times} \right])$ becomes:

\begin{gather*}
	P_{n}^{(r)}(t; 1\$a_{0}, \ldots, t^{r}\$a_{r},  t^{r+1}\$a_{r+1})\\
	= \text{CHOP}_{r} \Biggl[ \mathop{\sum} \limits_{i=1}^{a_{0}} {P_{n-1}^{(r)}(t; 1\$(i-1), t\$(a_{0}-i), t^{2}\$a_{1}, \ldots, t^{r}\$a_{r-1}, t^{r+1}\$(a_{r}+a_{r+1}))}\\
	+ \mathop{\sum} \limits_{i=1}^{a_{1}} {t^{i-1} P_{n-1}^{(r)}(t; 1\$a_{0}, t\$(i-1), t^{2}\$(a_{1}-i), t^{3}\$a_{2}, \ldots, t^{r}\$a_{r-1}, t^{r+1}\$(a_{r}+a_{r+1}))}\\
	+ \mathop{\sum} \limits_{i=1}^{a_{2}} {t^{a_{1} + 2 (i-1)} P_{n-1}^{(r)}(t; 1\$a_{0}, t\$a_{1}, t^{2}\$(i-1), t^{3}\$(a_{2}-i), \ldots, t^{r}\$a_{r-1}, t^{r+1}\$(a_{r}+a_{r+1}))}\\
	+ \ldots\ldots\\
	+ \mathop{\sum} \limits_{i=1}^{a_{r+1}} {t^{a_{1} + 2 a_{2} + \ldots + r a_{r} + (r+1)(i-1)} P_{n-1}^{(r)}(t; 1\$a_{0}, t\$a_{1}, \ldots, t^{r}\$a_{r}, t^{r+1}\$(a_{r+1}-1))} \Biggr].
\end{gather*}

Due to the $\text{CHOP}_{r}$ operator, many terms automatically disappear because of the power of $t$ in front. From a computational perspective, this observation eliminates many unnecessary terms and hence circumvents a lot of unnecessary computation. This has been automated in the Maple package {\tt FINCRT} so that a computer can derive a ``scheme'' for any fixed $r$ (completely on its own) and use it to enumerate $s_{n}(1 3 2, r)$ for as many terms as the user wants.\footnote{The ``scheme'' mentioned here is a liberal application of the word and differs from \emph{enumeration schemes}.}\\

For example, the Maple call {\tt F132rN(5,15);} for the first $15$ terms of $s_{n}(1 3 2, 5)$ produces the sequence:
\begin{gather*}
	0, 0, 0, 0, 5, 55, 394, 2225, 11539, 57064, 273612, 1283621, 5924924, 27005978, 121861262
\end{gather*}


\subsection{Extending to the pattern $1 2 4 3$}

First, we outline how to extend the previous approach to the pattern $1 2 4 3$. In addition to the variable $t$, we now introduce $2 n$ catalytic variables $x_{1}, \ldots, x_{n}$ and $y_{1}, \ldots, y_{n}$. The weight of a length $n$ permutation $\pi = \pi_{1} \ldots \pi_{n}$ will now be
\begin{align*}
	\text{weight}(\pi) := t^{N_{1243}(\pi)} \mathop{\prod} \limits_{i=1}^{n} {x_{i}^{\# \{ (a,b) \; : \; \pi_{a} > \pi_{b} = i,\; 1 \leq a < b \leq n\}} \cdot y_{i}^{\# \{ (a,b,c) \; : \; \pi_{a} = i < \pi_{c} < \pi_{b},\; 1 \leq a < b < c \leq n\}} }.
\end{align*}
For example, $\text{weight}(1 2 3 4 5 6) = 1$ and $\text{weight}(1 3 5 6 2 4) = t^{2} x_{2}^{3} x_{4}^{2} y_{1}^{5} y_{3}^{2}$. In essence, the weight encodes the number of $1 2 4 3$ patterns as well as information on $1 3 2$ and $2 1$ patterns (which may become the ``$2 4 3$'' or ``$4 3$'' of a $1 2 4 3$ when terms are inserted at the beginning of the permutation).\\

For each $n$, we define the polynomial
\begin{align*}
	P_{n}(t; x_{1}, \ldots, x_{n}; \; y_{1}, \ldots, y_{n}) := \mathop{\sum} \limits_{\pi \in \mathcal{S}_{n}} {\text{weight}(\pi)}.
\end{align*}
We now observe the following:\\

\begin{lemma}
	\label{lemma1243}
	Let $\pi = \pi_{1} \ldots \pi_{n}$ and suppose that $\pi_{1} = i$. If $\pi^{\prime} := \red(\pi_{2} \ldots \pi_{n})$, then
	\begin{align*}
		\weight(\pi) = x_{1} x_{2} \ldots x_{i-1} \cdot \weight(\pi^{\prime}) \vert_{A} \quad,
	\end{align*}	
	where $A$ is the set of substitutions given by
	\begin{align*}
		A := \begin{cases}
			x_{b} \rightarrow y_{i} x_{b+1} & i \leq b \leq n-1\\
			y_{b} \rightarrow t y_{b+1} & i \leq b \leq n-1 \quad .
		\end{cases}
	\end{align*}
		
\end{lemma}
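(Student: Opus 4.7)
The plan is to mimic the proof of Lemma~\ref{lemma132}, but with an extra layer of bookkeeping, since $\weight(\pi)$ now tracks three separate statistics: $1243$ patterns (via $t$), $132$ patterns (via the $y_j$), and $21$ patterns (via the $x_j$). Writing $\pi = i \pi_{2} \ldots \pi_{n}$, the operation that recovers $\pi_{2} \ldots \pi_{n}$ from $\pi'$ just increases every letter $\ge i$ in $\pi'$ by $1$, which already accounts for the raw index shifts $x_{b} \to x_{b+1}$ and $y_{b} \to y_{b+1}$ for $b \ge i$. The proof then reduces to comparing, statistic by statistic, what gets created when the new first entry $i$ is prepended.

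For the $21$ count, I would observe that any new $21$ occurrence in $\pi$ must involve position $1$, so its ``$1$'' is one of the letters $1, 2, \ldots, i-1$, each of which gains exactly one new such occurrence. This is exactly the prefactor $x_{1} x_{2} \cdots x_{i-1}$ and leaves $x_{b}$ for $b < i$ otherwise unaffected. For the $132$ count, any new $132$ in $\pi$ must also involve position $1$, so its ``$1$'' is $i$; it consists of $i$ together with a $21$ occurrence in $\pi_{2} \ldots \pi_{n}$ whose ``$1$'' is strictly greater than $i$. In $\pi'$ these $21$ occurrences are precisely what the variables $x_{b}$ with $b \ge i$ are tallying, and each produces one new $132$ with ``$1$'' equal to $i$; this is exactly the $y_{i}$ correction in the substitution $x_{b} \to y_{i} x_{b+1}$.

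For the $1243$ count, any new occurrence in $\pi$ must use position $1$ as its ``$1$'' and be completed by a $132$ occurrence in $\pi_{2} \ldots \pi_{n}$ whose ``$1$'' is strictly greater than $i$ (the three larger letters then realize the ``$2$'', ``$4$'', ``$3$''). These are precisely the contributions tracked by $y_{b}$ with $b \ge i$ in $\pi'$, so each picks up an extra factor of $t$, giving $y_{b} \to t y_{b+1}$ for $b \ge i$. The variables $x_{b}$ and $y_{b}$ for $b < i$ do not require corrections, since, as noted, no new $21$ has its ``$1$'' $\ge i$ and no new $132$ has its ``$1$'' $< i$.

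The main pitfall, and the one step I would check carefully, is that the two corrections do not double-count: the $y_{i}$ factor attached to the $x_{b}$ substitution tracks brand-new $132$ patterns (whose ``$1$'' is $i$), whereas the $t$ factor attached to the $y_{b}$ substitution tracks brand-new $1243$ patterns built from pre-existing $132$'s in $\pi'$. These correspond to disjoint events, so the substitutions can be applied independently, and the set $A$ described in the lemma falls out exactly.
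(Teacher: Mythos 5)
Your argument is correct and follows essentially the same route as the paper's own proof: shift the letters $\ge i$ (hence the index bumps), attach $x_{1}\cdots x_{i-1}$ for the new $2 1$ occurrences, attach $y_{i}$ to the $x_{b}$ ($b \ge i$) substitution because each $2 1$ with small entry above $i$ yields a new $1 3 2$ anchored at $i$, and attach $t$ to the $y_{b}$ ($b \ge i$) substitution because each $1 3 2$ with its ``$1$'' above $i$ yields a new $1 2 4 3$. Your closing remark on the two corrections tracking disjoint events is a harmless addition; the substance matches the paper.
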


\begin{proof}
	We assume $i$ to be a fixed value and will again compute $\weight(\pi)$ from $\weight(\pi^{\prime})$. First, we re-insert $i$ at the beginning of $\pi^{\prime}$ by shifting all the terms $i, i+1, \ldots , n-1$ up by $1$ (i.e., $x_{j} \rightarrow x_{j+1}$ and $y_{j} \rightarrow y_{j+1}$ for $j \geq i$). The new ``$i$'' would create new $2 1$ patterns and would require an extra factor of $x_{1} x_{2} \ldots x_{i-1}$ for the weight. Also, the re-insertion of $i$ would create new $1 3 2$ patterns. The number of such new patterns is exactly the number of $2 1$ patterns in the shifted $\pi^{\prime}$, where the ``$1$'' is greater than $i$. Therefore, our $x_{j}$ shift now becomes $x_{j} \rightarrow y_{i} x_{j+1}$ for $j \geq i$. Also, observe that $N_{1 2 4 3}(\pi)$ is equal to the number of occurrences of $1 2 4 3$ in $\pi^{\prime}$ \textbf{plus} the number of occurrences of $1 3 2$ in $\pi_{2} \ldots \pi_{n}$, where the term corresponding to the ``$1$'' is larger than $i$. Therefore, our $y_{j}$ shift now becomes $y_{j} \rightarrow t y_{j+1}$ for $j \geq i$.
\end{proof}

This directly leads to the new functional equation:\\

\begin{thm}
	For the pattern $\tau = 1 2 4 3$, 
	\begin{gather}
		P_{n}(t; x_{1}, \ldots, x_{n}; \; y_{1}, \ldots, y_{n}) =\notag \\
	\mathop{\sum} \limits_{i=1}^{n} {x_{1} x_{2} \ldots x_{i-1} \cdot P_{n-1}(t; \; x_{1}, \ldots, x_{i-1}, y_{i} x_{i+1}, \ldots, y_{i} x_{n}; \; y_{1}, \ldots, y_{i-1}, t y_{i+1}, \ldots, t y_{n})}. \tag{FE1243} \label{FE1243}
	\end{gather}
\end{thm}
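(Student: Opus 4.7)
The plan is to derive (FE1243) by summing the identity from Lemma~\ref{lemma1243} over all permutations in $\mathcal{S}_n$. The argument has three steps: partition, apply the lemma, and collect.

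First, I would split the defining sum for $P_n$ according to the value of $\pi_1$:
\begin{equation*}
P_n(t; x_1, \ldots, x_n; y_1, \ldots, y_n) = \sum_{i=1}^n \sum_{\substack{\pi \in \mathcal{S}_n \\ \pi_1 = i}} \weight(\pi).
\end{equation*}
For each fixed $i$, the map $\pi \mapsto \pi' := \red(\pi_2 \ldots \pi_n)$ is a bijection between $\{\pi \in \mathcal{S}_n : \pi_1 = i\}$ and $\mathcal{S}_{n-1}$, so the inner sum is indexed (up to this bijection) by $\pi' \in \mathcal{S}_{n-1}$.

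Second, I would apply Lemma~\ref{lemma1243} termwise. The lemma gives $\weight(\pi) = x_1 x_2 \cdots x_{i-1} \cdot \weight(\pi')|_A$, where the substitution rule $A$ depends only on $i$, not on $\pi'$. Since the factor $x_1 \cdots x_{i-1}$ is constant over $\pi'$ and since substitution of one set of variables for another is a linear operation on polynomials, it commutes with the finite sum over $\pi'$. Hence the inner sum becomes
\begin{equation*}
\sum_{\substack{\pi \in \mathcal{S}_n \\ \pi_1 = i}} \weight(\pi) = x_1 \cdots x_{i-1} \cdot \left( \sum_{\pi' \in \mathcal{S}_{n-1}} \weight(\pi') \right)\Big|_A = x_1 \cdots x_{i-1} \cdot P_{n-1}(t; x_1, \ldots, x_{n-1}; y_1, \ldots, y_{n-1})\Big|_A.
\end{equation*}

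Third, I would unpack $P_{n-1}|_A$ explicitly: $A$ replaces $x_b$ by $y_i x_{b+1}$ and $y_b$ by $t y_{b+1}$ for $i \leq b \leq n-1$, and leaves $x_b, y_b$ alone for $b < i$. Applying this to the argument list of $P_{n-1}$ yields
\[ P_{n-1}(t;\, x_1, \ldots, x_{i-1}, y_i x_{i+1}, \ldots, y_i x_n;\, y_1, \ldots, y_{i-1}, t y_{i+1}, \ldots, t y_n), \]
and summing over $i$ reproduces (FE1243). The main substantive work is already discharged by Lemma~\ref{lemma1243}; the only point that warrants explicit verification is that the substitution $A$ is a function of $i$ alone (not of $\pi'$), which is what licenses pulling it outside the summation over $\pi'$.
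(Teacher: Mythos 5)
Your proposal is correct and matches the paper's (implicit) argument: the paper simply states that Lemma~\ref{lemma1243} ``directly leads to'' (\ref{FE1243}), and the justification is exactly your three steps --- partition $\mathcal{S}_n$ by the value of $\pi_1$, apply the lemma termwise via the bijection with $\mathcal{S}_{n-1}$, and pull the $i$-dependent (but $\pi'$-independent) substitution $A$ and prefactor outside the inner sum. You have merely made explicit the routine summation that the paper leaves to the reader.
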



Again, our goal is to compute $f_{n}(t) = P_{n}(t; 1 \left[2 n \text{ times} \right])$. We can apply the same computational methods as before. For example, we can apply (\ref{FE1243}) directly to  $P_{n}(t; 1 \left[2 n \text{ times} \right])$ (and more generally, to objects of the form $P_{n}(t; 1 \left[a_{0} \text{ times} \right], \ldots, t^{s_{1}} \left[a_{s_{1}} \text{ times} \right], 1 \left[b_{0} \text{ times} \right], \ldots, t^{s_{2}} \left[b_{s_{2}} \text{ times} \right])$) to compute $f_{n}(t)$. This again gives us an algorithm that is faster than the direct weighted counting of $n!$ permutations but is still exponential-time (and memory).\\

This algorithm has been implemented in the procedure {\tt F1243full(n,t)} (in Maple package {\tt FINCRT}). For example, the Maple call {\tt F1243full(8,t);} computes $f_{8}(t)$ and outputs:
\begin{gather*}
	{t}^{36}+{t}^{31}+10\,{t}^{30}+3\,{t}^{28}+13\,{t}^{27}+9\,{t}^{26}+8\,{t}^{25}+37\,{t}^{24}+16\,{t}^{23}+16\,{t}^{22}+49\,{t}^{21}+60\,{t}^{20}\\
	+41\,{t}^{19}+130\,{t}^{18}+81\,{t}^{17}+157\,{t}^{16}+266\,{t}^{15}+184\,{t}^{14}+233\,{t}^{13}+542\,{t}^{12}+356\,{t}^{11}+771\,{t}^{
10}\\
	+877\,{t}^{9}+975\,{t}^{8}+972\,{t}^{7}+2180\,{t}^{6}+1710\,{t}^{5}+2658\,{t}^{4}+3119\,{t}^{3}+4600\,{t}^{2}+4478\,t+15767
\end{gather*}

Additionally, both the obvious analog of Lemma~\ref{crucial} as well as the computational reduction using the $\text{CHOP}_{r}$ operator still apply in this setting. This has also been automated in the Maple package {\tt FINCRT}.\\

For example, the Maple call {\tt F1243rN(1,15);} for the first $15$ terms of $s_{n}(1 2 4 3, 1)$ produces the sequence:
\begin{gather*}
	0, 0, 0, 1, 11, 88, 638, 4478, 31199, 218033, 1535207, 10910759, 78310579, 567588264, 4152765025
\end{gather*}
and the Maple call {\tt F1243rN(2,15);} for the first $15$ terms of $s_{n}(1 2 4 3, 2)$ produces the sequence:
\begin{gather*}
	0, 0, 0, 0, 4, 56, 543, 4600, 36691, 284370, 2174352, 16533360, 125572259, 955035260, 7283925999
\end{gather*}


\subsection{Extending to longer patterns}

The approach for the patterns $1 3 2$ and $1 2 4 3$ can be extended analogously to longer patterns of the form $1 2 \ldots (k-2) (k) (k-1)$. For example, if the pattern $\tau = 1 2 3 5 4$, we consider the variable $t$ and $3 n$ catalytic variables: $x_{1}, \ldots, x_{n}$ and $y_{1}, \ldots, y_{n}$ and $z_{1}, \ldots, z_{n}$. The weight of a length $n$ permutation $\pi = \pi_{1} \ldots \pi_{n}$ will now be
\begin{gather*}
	\text{weight}(\pi) = t^{N_{12354}(\pi)} \mathop{\prod} \limits_{i=1}^{n} {x_{i}^{\# \{ (a,b) \; : \; \pi_{a} > \pi_{b} = i\}} \cdot y_{i}^{\# \{ (a,b,c) \; : \; \pi_{a} = i < \pi_{c} < \pi_{b}\}} \cdot z_{i}^{\# \{ (a,b,c,d) \; : \; \pi_{a} = i < \pi_{b}< \pi_{d} < \pi_{c}\}} }
\end{gather*}
where it is always assumed that $a < b < c < d$.\\

An analogous functional equation is derived for the corresponding polynomial 
\begin{align*}
	P_{n}(t; x_{1}, \ldots, x_{n}; \; y_{1}, \ldots, y_{n}; \; z_{1}, \ldots, z_{n}) := \mathop{\sum} \limits_{\pi \in \mathcal{S}_{n}} {\text{weight}(\pi)}
\end{align*}
and all the analogous computational methods work in this setting as well. The $1 2 3 5 4$ case has also been automated in the Maple package {\tt FINCRT}.\\

For example, the Maple call {\tt F12354rN(0,14);} for the first $14$ terms of $s_{n}(1 2 3 5 4, 0)$ produces the sequence:
\begin{gather*}
	1, 2, 6, 24, 119, 694, 4582, 33324, 261808, 2190688, 19318688, 178108704, 1705985883, 16891621166
\end{gather*}
and the Maple call {\tt F12354rN(1,15);} for the first $15$ terms of $s_{n}(1 2 3 5 4, 1)$ produces the sequence:
\begin{gather*}
	0, 0, 0, 0, 1, 19, 246, 2767, 29384, 305646, 3170684, 33104118, 349462727, 3738073247, 40549242195
\end{gather*}


\section{Counting occurrences of the pattern $2 3 \ldots k 1$}\label{p2k1}

In this section, we extend the previous techniques to handle patterns of the form $2 3 \ldots k 1$. Although $s_{n}(2 3 1,r) = s_{n}(1 3 2,r)$ for every $r$ and $n$ (by reversal), we will develop an approach for handling $2 3 1$ directly\footnote{As opposed to computing the equivalent pattern $1 3 2$.} and then show how this can be extended to longer patterns of the form $2 3 \ldots k 1$. This new approach for handling $2 3 1$ will also be necessary in the next section for considering multiple patterns simultaneously.

\subsection{Permutations containing $2 3 1$}
In this section, we will assume that our (fixed) pattern $\tau = 2 3 1$. We define the analogous polynomial
\begin{align*}
	f_{n}(t) := \mathop{\sum} \limits_{\pi \in \mathcal{S}_{n}} t^{N_{2 3 1}(\pi)}.
\end{align*}
Recall that the coefficient of $t^{r}$ in $f_{n}(t)$ will be exactly $s_{n}(2 3 1,r)$.\\

In addition to the variable $t$, we introduce $n(n+1)/2$ catalytic variables $x_{i,j}$ with $1 \leq j \leq i \leq n$ and define the weight of a permutation $\pi = \pi_{1} \ldots \pi_{n}$ to be
\begin{align*}
	\text{weight}(\pi) := t^{N_{2 3 1}(\pi)} \mathop{\prod} \limits _{1 \leq j \leq i \leq n} x_{i,j}^{\# \{ (a,b) \; : \; \pi_{a} > \pi_{b} , \; \pi_{a}=i, \; \pi_{b}<j, \; 1 \leq a < b \leq n\}}
\end{align*}
For example, $\text{weight}(1 2 3 4 5) = 1$ and $\text{weight}(2 4 1 5 3) = t^{2} x_{2,2} x_{4,2} x_{4,3} x_{4,4}^{2} x_{5,4} x_{5,5}$.\\

We will again define an analogous multi-variate polynomial $P_{n}$ on all the previously defined variables. However, for notational convenience, the $x_{i,j}$ variables will be written as a matrix of variables:
\begin{align}
	X_{n} := \left[ \begin{array}{ccccc} 
		x_{1,1} & & \cdots & & x_{1,n}\\
		 & \ddots & & & \\
		\vdots & & x_{i,i} & & \vdots \\
		 & & & \ddots & \\
		x_{n,1} & & \cdots & & x_{n,n}
		\end{array} \right]
\end{align}
where we will disregard the entries above the diagonal (i.e., the $x_{i,j}$ entries where $j > i$).\\

For each $n$, we now define the polynomial
\begin{align*}
	P_{n}(t; X_{n}) := \mathop{\sum} \limits_{\pi \in \mathcal{S}_{n}} \text{weight}(\pi).
\end{align*}
Recall that $P_{n}(t; \boldsymbol{1}) = f_{n}(t)$, where $\boldsymbol{1}$ is the matrix of all $1$'s. We will derive a functional equation for this $P_{n}$ function, but first, we derive the following lemma:\\

\begin{lemma}
	Let $\pi = \pi_{1} \ldots \pi_{n}$ and suppose that $\pi_{1} = i$. If $\pi^{\prime} := \red(\pi_{2} \ldots \pi_{n})$, then
	\begin{align*}
		\weight(\pi) = x_{i,1}^{0} x_{i,2}^{1} \ldots x_{i,i}^{i-1} \cdot \weight(\pi^{\prime}) \vert_{A} \quad ,
	\end{align*}
	where $A$ is the set of substitutions given by
	\begin{align*}
		A := \begin{cases}
			x_{b,c} \rightarrow x_{b+1,c} & b \geq i, c < i\\
			x_{b,c} \rightarrow x_{b+1,c+1} & b \geq i, c > i\\
			x_{b,c} \rightarrow t x_{b+1,c} \cdot x_{b+1,c+1} & b \geq i, c = i \quad .
			\end{cases}
	\end{align*}
\end{lemma}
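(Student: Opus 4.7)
The plan mirrors the proofs of Lemma \ref{lemma132} and Lemma \ref{lemma1243}: reconstruct $\pi$ from $\pi'$ by shifting the values $i, i+1, \ldots, n-1$ up by $1$ and prepending $i$, then track how each pair $(a, b)$ contributing to $\weight(\pi)$ relates to the corresponding pair in $\pi'$. The pairs split naturally into those with $a = 1$, which are responsible for the prefactor $x_{i,1}^{0} x_{i,2}^{1} \cdots x_{i,i}^{i-1}$, and those with $a \geq 2$, which correspond bijectively to pairs $(a-1, b-1)$ in $\pi'$ and are accounted for by the substitutions in $A$.

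For the prefactor, I would fix $j \in \{1, \ldots, i\}$ and determine the exponent of $x_{i,j}$ coming from pairs $(1, b)$: these are exactly the positions $b > 1$ with $\pi_b < j$ (the condition $\pi_1 > \pi_b$ is automatic since $\pi_b < j \leq i = \pi_1$). Because $\pi_1 = i$ and $\pi$ is a permutation of $\{1, \ldots, n\}$, all of $1, \ldots, j-1$ occur among $\pi_2, \ldots, \pi_n$, so the exponent is exactly $j - 1$. Multiplying over $j$ yields the stated prefactor.

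For the substitutions, I would case-split on where $\pi_a$ and $\pi_b$ sit relative to $i$ (note that $\pi_a = i$ and $\pi_b = i$ are both ruled out, since $\pi_1 = i$ and $a, b \geq 2$). Writing $B = \pi_a$ and $D = \pi_b$, the corresponding pair $(a-1, b-1)$ in $\pi'$ has value labels $B' \in \{B, B-1\}$ and $D' \in \{D, D-1\}$ depending on whether each is below or above $i$. The cases $B < i$ and ($B > i$, $D > i$) are routine: the variable indices either coincide with those in $\pi$ (so the $b < i$ part of $A$ is a no-op) or shift in lockstep (so the rule $x_{b,c} \to x_{b+1, c+1}$ for $c > i$ recovers the weight exactly).

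The delicate case, and the main obstacle, is $B > i$ with $D < i$. Here the range of indices $C'$ for the relevant $\pi'$-variables ($D+1 \leq C' \leq B-1$) misses one value compared with the $\pi$-range ($D+1 \leq C \leq B$), yet must still account for both $x_{B, i}$ and $x_{B, i+1}$ in $\weight(\pi)$, since neither is produced by the $c < i$ or $c > i$ substitutions. The single $\pi'$-variable $x_{B-1, i}$ is thus forced to produce both, which is exactly what $x_{b, i} \to t\, x_{b+1, i}\, x_{b+1, i+1}$ does. The accompanying factor of $t$ records the new $231$ pattern $(1, a, b)$ created by prepending $i$ (we have $\pi_a > i > \pi_b$). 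A short check confirms that such pairs $(a, b)$ are in bijection with the new $231$ occurrences, so the total power of $t$ comes out to $N_{231}(\pi) - N_{231}(\pi')$, completing the argument.
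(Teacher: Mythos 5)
Your proof is correct and follows essentially the same route as the paper's: both reconstruct $\weight(\pi)$ from $\weight(\pi^{\prime})$ by prepending $i$, attributing the prefactor to pairs involving position $1$ and the substitutions to the shift of the values $i,\ldots,n-1$, with the factor of $t$ in the $c=i$ rule accounting exactly for the new $2 3 1$ occurrences through position $1$. Your explicit case analysis (in particular the $B>i$, $D<i$ case, where the single variable $x_{B-1,i}$ must produce both $x_{B,i}$ and $x_{B,i+1}$) just spells out what the paper compresses into its observation that the exponents of $x_{k,i}$ and $x_{k,i+1}$ coincide in $\weight(\pi)$.
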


\begin{proof}
	We assume $i$ to be a fixed value. Observe that $N_{2 3 1}(\pi)$ is equal to the number of occurrences of $2 3 1$ in $\pi_{2} \ldots \pi_{n}$ \textbf{plus} the number of occurrences of $2 1$ in $\pi_{2} \ldots \pi_{n}$, where the term corresponding to the ``$2$'' is greater than $i$ and the term corresponding to the ``$1$'' is less than $i$. We make the following two observations. First, in $\weight(\pi)$, the exponents of $x_{k,i}$ and $x_{k,i+1}$ are equal for each $k$ (since $\pi_{1} = i$). Second, the number of $2 3 1$ patterns that include the first term $\pi_{1} = i$ is the sum of the exponents of $x_{j,i}$ for $i+1 \leq j \leq n$.
	
	If we re-insert $i$ at the beginning of $\pi^{\prime}$, we would shift all the terms $i, i+1, \ldots , n-1$ up by $1$. This (combined with the prior observations) would lead to the set of substitutions given by $A$. Note that there is no case for $b < i, c \geq i$ since the $x_{b,c}$ variables are only defined for $b \geq c$. Finally, the new ``$i$'' would create new $2 1$ patterns and would require an extra factor of $x_{i,1}^{0} x_{i,2}^{1} \ldots x_{i,i}^{i-1}$ for the weight. 
\end{proof}


Now, define the operator $R_{1}$ on an $n \times n$ square matrix $X_{n}$ and $i < n$ to be:
\begin{align}
	R_{1}(X_{n},i) := \left[ \begin{array}{ccccccc} 
		x_{1,1} & \cdots & x_{1,i-1} & t x_{1,i} x_{1,i+1} & x_{1,i+2} & \cdots & x_{1,n} \\
		\vdots & \ddots & & \vdots & & & \vdots \\
		x_{i-1,1} &  & x_{i-1,i-1} &  & \cdots & & x_{i-1,n} \\
		x_{i+1,1} & \cdots & x_{i+1,i-1} & t x_{i+1,i} x_{i+1,i+1} & x_{i+1,i+2} & \cdots & x_{i+1,n} \\
		\vdots & & \vdots & \vdots & \ddots & & \vdots\\
		\vdots & & \vdots & \vdots & & \ddots & \vdots\\
		x_{n,1} & \cdots & x_{n,i-1} & t x_{n,i} x_{n,i+1} & x_{n,i+2} & \cdots & x_{n,n}
		\end{array} \right]. \label{R1}
\end{align}
In essence, the $R_{1}$ operator deletes the $i$-th row, merges the $i$-th and $(i+1)$-th columns via term-by-term multiplication, and multiplies this new column by a factor of $t$. If $i = n$, then $R_{1}(X_{n},i)$ is defined to be the $(n-1) \times (n-1)$ matrix obtained by deleting the $n$-th row and $n$-th column from $X_{n}$. It is important to note that while this operator is defined on any $n \times n$ matrix, it will only be applied to our ``matrix of variables'' $X_{n}$ to get a smaller $(n-1) \times (n-1)$ matrix.\\

The previous lemma now leads directly to the following:\\

\begin{thm}
	For the pattern $\tau = 2 3 1$, 
	\begin{align}
		P_{n}(t; X_{n}) = \mathop{\sum} \limits_{i=1}^{n} {x_{i,1}^{0} x_{i,2}^{1} \ldots x_{i,i}^{i-1} \cdot P_{n-1}(t; R_{1}(X_{n},i))}. \tag{FE231} \label{FE231}
	\end{align}
\end{thm}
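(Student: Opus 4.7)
The plan is to derive the functional equation directly from the preceding lemma by partitioning $\mathcal{S}_n$ according to the value of $\pi_1$. For each fixed $i \in \{1, \ldots, n\}$, the map $\pi \mapsto \red(\pi_2 \ldots \pi_n)$ is a bijection between $\{\pi \in \mathcal{S}_n : \pi_1 = i\}$ and $\mathcal{S}_{n-1}$. Writing
\begin{align*}
P_n(t; X_n) = \sum_{i=1}^n \sum_{\substack{\pi \in \mathcal{S}_n \\ \pi_1 = i}} \weight(\pi)
\end{align*}
and applying the lemma term-by-term, each inner sum becomes
\begin{align*}
x_{i,1}^{0} x_{i,2}^{1} \cdots x_{i,i}^{i-1} \cdot \sum_{\pi' \in \mathcal{S}_{n-1}} \weight(\pi')\big|_A,
\end{align*}
where $A$ is the substitution set from the lemma.

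The remaining step is to check that evaluating $P_{n-1}(t; \cdot)$ with the variables of $X_{n-1}$ replaced by the entries of $R_1(X_n, i)$ produces exactly $\sum_{\pi'} \weight(\pi')\big|_A$. This is a direct verification, matching entries of $R_1(X_n,i)$ (viewed as an $(n-1) \times (n-1)$ matrix indexed by $1 \le c \le b \le n-1$) with the substitution rules in $A$. Concretely, for $b \ge i$ the row $b$ of $X_{n-1}$ corresponds to row $b+1$ of $X_n$ (since row $i$ is deleted), while for $b < i$ the row is unchanged; similarly, columns to the right of $i$ are shifted, and the merged $i$-th column carries the factor of $t$. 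The three bulleted cases in $A$ then correspond precisely to $(b \ge i, c < i)$, $(b \ge i, c > i)$, and $(b \ge i, c = i)$ in $R_1(X_n, i)$, while the untouched entries with $b < i$ require no substitution (and are also untouched by $R_1$). The case $i = n$ agrees with the definition of $R_1(X_n, n)$ as deleting the last row and column, since no column merging is needed.

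The only potential point of friction is bookkeeping in this matching: keeping the index shifts straight between $X_n$ and $X_{n-1}$, and correctly handling the boundary case $i = n$. Once these are checked, the theorem is immediate by summing the lemma over $i$ and recognizing the inner sum as $P_{n-1}(t; R_1(X_n, i))$.
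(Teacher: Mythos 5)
Your proposal is correct and follows the same route as the paper, which states that the theorem "leads directly" from the preceding lemma: partition $\mathcal{S}_{n}$ by the value of $\pi_{1}$, apply the lemma to each class, and identify the substitution set $A$ with evaluation at $R_{1}(X_{n},i)$. The bookkeeping you flag (row/column index shifts, the $i=n$ boundary case, and the fact that entries with $b<i$, $c\geq i$ lie above the diagonal and are disregarded) all checks out.
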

\noindent Note that while all entries in the matrix are changed for consistency, we will continue to disregard the entries above the diagonal.\\

Again, our goal is to compute $P_{n}(t; \boldsymbol{1})$, and the analogous computational techniques from previous sections will also apply in this setting. For example, we can apply (\ref{FE231}) directly to $P_{n}(t; \boldsymbol{1})$ as opposed to computing $P_{n}(t; X_{n})$ symbolically and substituting $x_{i,j} = 1$ at the end. The following result, which is obvious from the definition of the operator $R_{1}$, provides a substantial simplification:
\begin{lemma}
	\label{sqred}
	Let $A$ be a square matrix where every row is identical (i.e., the $i$-th row and the $j$-th row are equal for every $i,j$). Then, $R_{1}(A,i)$ will also be a square matrix with identical rows.
\end{lemma}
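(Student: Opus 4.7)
The plan is essentially to read the statement off the definition of $R_{1}$ displayed in equation (\ref{R1}). The operator $R_{1}(\cdot, i)$ performs only three things to its input matrix: it deletes the $i$-th row; within each remaining row it replaces the adjacent pair of entries in columns $i$ and $i+1$ by a single entry equal to $t$ times their product; and it leaves the other entries unchanged. Each of these manipulations is applied uniformly across rows and depends only on the column index, not on which row we are in, so the property ``all rows equal'' is automatically preserved.

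To carry this out, first assume $i < n$ and write the common row of $A$ as $(a_{1}, a_{2}, \ldots, a_{n})$. After deleting the $i$-th row, we have $n-1$ copies of this tuple. Applying the column-merging step to each surviving row produces $n-1$ copies of the common $(n-1)$-tuple
\begin{align*}
    (a_{1}, \ldots, a_{i-1},\; t\, a_{i} a_{i+1},\; a_{i+2}, \ldots, a_{n}),
\end{align*}
so $R_{1}(A,i)$ is an $(n-1) \times (n-1)$ square matrix with identical rows. For the edge case $i = n$, the operator merely strikes out the last row and last column, which trivially preserves the identical-row property and yields an $(n-1) \times (n-1)$ matrix.

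There is no genuine obstacle here; the lemma is a bookkeeping consequence of the definition. The only thing worth being careful about is separating the $i = n$ case, since there the ``merging of columns $i$ and $i+1$'' description does not literally apply and must be replaced by the column-deletion convention stated just after (\ref{R1}).
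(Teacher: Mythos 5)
Your proof is correct and takes essentially the same route as the paper, which offers no argument at all beyond declaring the lemma ``obvious from the definition of the operator $R_{1}$''; you have simply made that observation explicit, including the $i=n$ edge case. Nothing further is needed.
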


By Lemma~\ref{sqred}, repeated applications of $R_{1}$ to the all ones matrix $\boldsymbol{1}$ will still result in a matrix with identical rows. Therefore, it is sufficient to keep track of only one row as opposed to the entire matrix. Also observe that repeated applications of $R_{1}$ to the matrix $\boldsymbol{1}$ will always result in a matrix whose entries are powers of $t$. Let $Q_{n}(t; c_{1}, \ldots, c_{n})$ denote the polynomial $P_{n}(t; C)$, where $C$ is the $n \times n$ matrix where every row is $\left[ c_{1}, \ldots, c_{n} \right]$ and every $c_{i}$ is a power of $t$. This leads to a functional equation analogous to (\ref{FE231}):
\begin{align}
	Q_{n}(t; c_{1}, \ldots, c_{n}) = \mathop{\sum} \limits_{i=1}^{n} {c_{1}^{0} c_{2}^{1} \ldots c_{i}^{i-1} \cdot Q_{n-1}(t; c_{1}, \ldots, c_{i-1}, t c_{i} c_{i+1}, c_{i+2}, \ldots, c_{n})}. \tag{FE231c} \label{FE231c}
\end{align}
Note that $Q_{n}(t; 1 \left[n \text{ times} \right])$ is exactly our desired polynomial $P_{n}(t; \boldsymbol{1}) = f_{n}(t)$. However, this interpretation only forces us to deal with $n$ catalytic variables (the $c_{i}$'s) as opposed to $n(n+1)/2$ catalytic variables (the $x_{i,j}$'s). Just as in prior sections, we can repeatedly apply our functional equation (\ref{FE231c}) to compute $Q_{n}(t; 1 \left[n \text{ times} \right])$.\\

When the sequence $s_{n}(2 3 1, r)$ is desired for a fixed $r$, the obvious analog of Lemma~\ref{crucial} and the computational reduction using the $\text{CHOP}_{r}$ operator can again be used. This has been implemented in the Maple package {\tt F231}.\footnote{Although all output would be equivalent to the $1 3 2$ case, the approach here will be necessary when considering multiple patterns.}

\subsection{Extending to the pattern $2 3 4 1$}
In this section, we outline how to extend the approach for $2 3 1$ to an analogous (but more complicated) approach for $2 3 4 1$. In addition to the variable $t$, we now introduce $n(n+1)/2$ catalytic variables $x_{i,j}$ with $1 \leq j \leq i \leq n$ and $n(n+1)/2$ more catalytic variables $y_{i,j}$ with $1 \leq j \leq i \leq n$ (a total of $n(n+1)$ catalytic variables). Define the weight of a permutation $\pi = \pi_{1} \ldots \pi_{n}$ to be
\begin{gather*}
	\text{weight}(\pi) := \\
	t^{N_{2 3 4 1}(\pi)} \mathop{\prod} \limits _{1 \leq j \leq i \leq n} x_{i,j}^{\# \{ (a,b) \; : \; \pi_{a} > \pi_{b} , \; \pi_{a}=i, \; \pi_{b}<j, \; 1 \leq a < b \leq n\}} \cdot y_{i,j}^{\# \{ (a,b,c) \; : \; \pi_{c} < \pi_{a} < \pi_{b} , \; \pi_{a}=i, \; \pi_{c}<j, \; 1 \leq a < b < c \leq n\}}
\end{gather*}
For example, $\text{weight}(2 4 3 5 1) = t^{2} x_{2,2} x_{3,2} x_{3,3} x_{4,2} x_{4,3} x_{4,4}^{2} x_{5,2} x_{5,3} x_{5,4} x_{5,5} y_{2,2}^{3} y_{3,2} y_{3,3} y_{4,2} y_{4,3} y_{4,4}$.\\

The $x_{i,j}$ variables and the $y_{i,j}$ variables will be written as matrices of variables:
\begin{align}
	X_{n} := \left[ \begin{array}{ccccc} 
		x_{1,1} & & \cdots & & x_{1,n}\\
		 & \ddots & & & \\
		\vdots & & x_{i,i} & & \vdots \\
		 & & & \ddots & \\
		x_{n,1} & & \cdots & & x_{n,n}
		\end{array} \right], \; \; \; \;
	Y_{n} := \left[ \begin{array}{ccccc} 
		y_{1,1} & & \cdots & & y_{1,n}\\
		 & \ddots & & & \\
		\vdots & & y_{i,i} & & \vdots \\
		 & & & \ddots & \\
		y_{n,1} & & \cdots & & y_{n,n}
		\end{array} \right]
\end{align}
where we will disregard the entries above the diagonal.\\

For each $n$, we define the polynomial
\begin{align*}
	P_{n}(t; X_{n}, Y_{n}) := \mathop{\sum} \limits_{\pi \in \mathcal{S}_{n}} \text{weight}(\pi)
\end{align*}
and again $P_{n}(t; \boldsymbol{1}, \boldsymbol{1}) = f_{n}(t)$ is our desired polynomial. We now have the following result:\\

\begin{lemma}
	Let $\pi = \pi_{1} \ldots \pi_{n}$ and suppose that $\pi_{1} = i$. If $\pi^{\prime} := \red(\pi_{2} \ldots \pi_{n})$, then
	\begin{align*}
		\weight(\pi) = x_{i,1}^{0} x_{i,2}^{1} \ldots x_{i,i}^{i-1} \cdot \weight(\pi^{\prime}) \vert_{A^{\prime}} \quad ,
	\end{align*}
	where $A^{\prime}$ is the set of substitutions given by
	\begin{align*}
		A^{\prime} := \begin{cases}
			x_{b,c} \rightarrow y_{i,c} \cdot x_{b+1,c} & b \geq i, c < i\\
			x_{b,c} \rightarrow x_{b+1,c+1} & b \geq i, c > i\\
			x_{b,c} \rightarrow y_{i,i} \cdot x_{b+1,c} \cdot x_{b+1,c+1} & b \geq i, c = i\\
			y_{b,c} \rightarrow y_{b+1,c} & b \geq i, c < i\\
			y_{b,c} \rightarrow y_{b+1,c+1} & b \geq i, c > i\\
			y_{b,c} \rightarrow t y_{b+1,c} \cdot y_{b+1,c+1} & b \geq i, c = i \quad .
		\end{cases}
	\end{align*}
\end{lemma}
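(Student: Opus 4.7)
My plan is to follow the same strategy as the preceding $231$ lemma, fixing $i = \pi_1$ and comparing $\weight(\pi)$ to $\weight(\pi')$ under the insertion of $i$ at position $1$. The comparison involves bookkeeping four kinds of changes: the shift on all values $\geq i$ (which affects the subscripts of both the $x$- and $y$-variables), the new $2 1$-patterns created with $\pi_1 = i$ as the ``$2$'', the new $2 3 1$-patterns created with $\pi_1 = i$ as the ``$2$'', and the new $2 3 4 1$-patterns created with $\pi_1 = i$ as the ``$2$''. Each of these four contributions must be pushed into the appropriate catalytic variable via the substitution rules in $A'$.

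The prefactor $x_{i,1}^{0} x_{i,2}^{1} \cdots x_{i,i}^{i-1}$ arises exactly as in the $231$ case, since the new $21$-patterns involving $\pi_1 = i$ are tracked only by the $x$-variables and have the same structure. The number of new $2 3 1$-patterns of $\pi$ that involve $\pi_1 = i$ equals the number of $2 1$-patterns in $\pi_2 \ldots \pi_n$ whose ``$2$'' is greater than $i$ and whose ``$1$'' is less than $i$; these produce the extra factors of $y_{i,c}$ and $y_{i,i}$ that appear in the $x$-substitutions for $b \geq i$. Similarly, the number of new $2 3 4 1$-patterns of $\pi$ that involve $\pi_1 = i$ equals the number of $2 3 1$-patterns in $\pi_2 \ldots \pi_n$ whose ``$2$'' is greater than $i$ and whose ``$1$'' is less than $i$; these produce the single factor of $t$ appearing in the $y$-substitution at $c = i$. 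Finally, just as in the $231$ case, one observes that in $\weight(\pi)$ the exponents of $x_{k,i}$ and $x_{k,i+1}$ are equal for each $k \geq i+1$, and similarly for $y_{k,i}$ and $y_{k,i+1}$, because the value $i$ appears only at position $1$ in $\pi$; this is exactly what forces the $c = i$ substitution rules for both $x$ and $y$ to split a single variable into a product of two.

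The main obstacle is the $c = i$ boundary case, whose rule must simultaneously effect a column-split (one variable becoming two with equal exponents), produce the correct ``combined pattern'' factor ($y_{i,i}$ for the $x$-substitution, $t$ for the $y$-substitution), and avoid any double-counting against the $c < i$ and $c > i$ rules. My plan is to verify this by computing, for each of the six substitution rules, the exponent of the right-hand variable in $\weight(\pi)$ directly from the definition and checking that it matches the corresponding exponent in $\weight(\pi')$. Once the boundary case is verified by this direct matching, the other five rules follow by routine analysis of how the value shift interacts with whether the ``$1$''-threshold lies strictly below, at, or strictly above $i$, completely analogously to the $231$ case.
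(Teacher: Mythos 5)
Your proposal is correct and follows essentially the same route as the paper's own proof: fix $i=\pi_1$, account for the value shift, the new $2 1$, $2 3 1$, and $2 3 4 1$ patterns created by the insertion, and use the equality of the column-$i$ and column-$(i{+}1)$ exponents (since $i$ occurs only at position $1$) to justify the $c=i$ splitting rules. The key counts you identify (new $2 3 1$'s tracked via the $x_{b,c}$ exponents with $b\geq i$, new $2 3 4 1$'s tracked via the $y_{b,i}$ exponents with $b\geq i$) are exactly the observations the paper records, so no further comment is needed.
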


\begin{proof}
	We assume $i$ to be a fixed value. Observe that $N_{2 3 4 1}(\pi)$ is equal to the number of occurrences of $2 3 4 1$ in $\pi_{2} \ldots \pi_{n}$ \textbf{plus} the number of occurrences of $2 3 1$ in $\pi_{2} \ldots \pi_{n}$, where the term corresponding to the ``$2$'' is greater than $i$ and the term corresponding to the ``$1$'' is less than $i$. We make the following few observations. First, in $\weight(\pi)$, the exponents of $x_{k,i}$ and $x_{k,i+1}$ are equal and the exponents of $y_{k,i}$ and $y_{k,i+1}$ are equal for each $k$ (since $\pi_{1} = i$). Second, the number of $2 3 4 1$ patterns that include the first term $\pi_{1} = i$ is the sum of the exponents of $y_{j,i}$ for $i+1 \leq j \leq n$. Third, the number of $2 3 1$ patterns that include the first term $\pi_{1} = i$ (i.e., the ``$2$'' is equal to $i$) and whose ``$1$'' term is less than $k$ is equal to the sum of the exponents of $x_{j,k}$ for $i+1 \leq j \leq n$.
	
	If we re-insert $i$ at the beginning of $\pi^{\prime}$, we would shift all the terms $i, i+1, \ldots , n-1$ up by $1$. This (combined with the prior observations) would lead to the set of substitutions given by $A^{\prime}$. Note that there is no case for $b < i, c \geq i$ since the $x_{b,c}$ variables are only defined for $b \geq c$. Finally, the new ``$i$'' would create new $2 1$ patterns and would require an extra factor of $x_{i,1}^{0} x_{i,2}^{1} \ldots x_{i,i}^{i-1}$ for the weight. 
\end{proof}

In addition to the previous $R_{1}$ operator defined in Eq.~\ref{R1}, we define another operator $R_{2}$ on two square matrices $X_{n}$ and $Y_{n}$ (of equal dimension) and $i<n$ to be: 
\begin{align}
	R_{2}(X_{n},Y_{n},i) := \left[ \begin{array}{ccccccc} 
		x_{1,1} & \cdots & x_{1,i-1} & y_{i,i} x_{1,i} x_{1,i+1} & x_{1,i+2} & \cdots & x_{1,n} \\
		\vdots & \ddots & & \vdots & & & \vdots \\
		x_{i-1,1} &  & x_{i-1,i-1} &  & \cdots & & x_{i-1,n} \\
		y_{i,1} x_{i+1,1} & \cdots & y_{i,i-1} x_{i+1,i-1} & y_{i,i} x_{i+1,i} x_{i+1,i+1} & x_{i+1,i+2} & \cdots & x_{i+1,n} \\
		\vdots & & \vdots & \vdots & \ddots & & \vdots\\
		\vdots & & \vdots & \vdots & & \ddots & \vdots\\
		y_{i,1} x_{n,1} & \cdots & y_{i,i-1} x_{n,i-1} & y_{i,i} x_{n,i} x_{n,i+1} & x_{n,i+2} & \cdots & x_{n,n}
		\end{array} \right]. \label{R2}
\end{align}
In essence, the $R_{2}$ operator deletes the $i$-th row, merges the $i$-th and $(i+1)$-th columns via term-by-term multiplication (and multiplies this new column by a factor of $y_{i,i}$), and scales all $x_{b,c}$ with $b > i$ and $c < i$ by terms from $Y_{n}$. If $i=n$, then $R_{2}(X_{n},Y_{n},i)$ is defined to be the $(n-1) \times (n-1)$ matrix obtained by deleting the $n$-th row and $n$-th column from $X_{n}$.\\

The previous lemma now leads to the following:
\begin{thm}
	For the pattern $\tau = 2 3 4 1$, 
	\begin{align}
		P_{n}(t; X_{n},Y_{n}) = \mathop{\sum} \limits_{i=1}^{n} {x_{i,1}^{0} x_{i,2}^{1} \ldots x_{i,i}^{i-1} \cdot P_{n-1}(t; R_{2}(X_{n},Y_{n},i), R_{1}(Y_{n},i))}. \tag{FE2341} \label{FE2341}
	\end{align}
\end{thm}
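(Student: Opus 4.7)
The plan is to derive the functional equation (\ref{FE2341}) as a direct consequence of the preceding lemma by summing over $\pi \in \mathcal{S}_n$ and grouping terms according to the value of $\pi_1$. The substantive content is the verification that the entry-by-entry substitution rule $A'$ is precisely what one obtains by plugging the matrices $R_2(X_n, Y_n, i)$ and $R_1(Y_n, i)$ into the two matrix arguments of $P_{n-1}$.

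Splitting $P_n(t; X_n, Y_n) = \sum_{\pi \in \mathcal{S}_n} \weight(\pi)$ by $\pi_1 = i$ and using that $\pi \mapsto \pi' := \red(\pi_2 \ldots \pi_n)$ is a bijection between $\{\pi \in \mathcal{S}_n : \pi_1 = i\}$ and $\mathcal{S}_{n-1}$, the preceding lemma gives
$$\sum_{\pi \in \mathcal{S}_n,\, \pi_1 = i} \weight(\pi) \;=\; x_{i,1}^{0} x_{i,2}^{1} \cdots x_{i,i}^{i-1} \cdot \biggl(\sum_{\pi' \in \mathcal{S}_{n-1}} \weight(\pi')\biggr)\!\bigg\vert_{A'}.$$
Because $A'$ depends on $i$ but not on $\pi'$, the parenthesized sum with the substitution applied equals $P_{n-1}(t; X', Y')$, where $X'$ and $Y'$ are the $(n-1) \times (n-1)$ matrices whose $(b,c)$-entries (for $1 \le c \le b \le n-1$) are the images of $x_{b,c}$ and $y_{b,c}$ under $A'$.

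It remains to verify $X' = R_2(X_n, Y_n, i)$ and $Y' = R_1(Y_n, i)$ as matrices of expressions, with the standing convention that entries above the diagonal are disregarded. This is a routine case analysis: for $b < i$ no clause of $A'$ applies, matching the unchanged upper-left blocks of both matrices; for $b \ge i$ one checks in each of the three ranges $c < i$, $c = i$, and $c > i$ that the entry produced by $A'$ coincides with the explicit entry of $R_2(X_n,Y_n,i)$ in (\ref{R2}) or of $R_1(Y_n,i)$ in (\ref{R1}). This amounts to the observation that deletion of row $i$ sends the original row index $b \ge i$ to $b+1$, that merging columns $i$ and $i+1$ sends $c > i$ to $c+1$, and that the merged middle column absorbs the extra factor of $t$ for $Y$ and $y_{i,i}$ for $X$. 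The edge case $i = n$ is vacuous, since $\weight(\pi')$ uses no variables with $b \ge n$, consistent with the definition of $R_1, R_2$ at $i = n$ as deletion of the last row and column.

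Summing over $i = 1, \ldots, n$ then yields (\ref{FE2341}). The only real obstacle is bookkeeping --- keeping the row and column index shifts consistent across the six clauses of $A'$, and confirming that the above-diagonal entries (which the paper explicitly disregards) play no role in the identification of $X'$ and $Y'$ with $R_2(X_n,Y_n,i)$ and $R_1(Y_n,i)$.
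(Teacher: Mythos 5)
Your proposal is correct and follows exactly the route the paper takes: the paper gives no separate proof of (\ref{FE2341}), simply asserting that it follows from the preceding lemma by summing over the value of $\pi_1$, which is precisely your argument (with the matrix-identification bookkeeping spelled out explicitly).
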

As in prior sections, we recursively apply the functional equation directly to $P_{n}(t; \boldsymbol{1}, \boldsymbol{1})$ (and subsequent instances of $P_{k}$). Observe that in this scenario, Lemma~\ref{sqred} still applies for the $R_{1}$ operator and more specifically the ``$Y_{n}$'' matrix in $P_{n}$. While the lemma does not apply to the $R_{2}$ operator, this still allows us to reduce the number of catalytic variables. Let $Q_{n}(t; C; d_{1}, \ldots, d_{n})$ denote the polynomial $P_{n}(t; C, D)$ where every entry of the $n \times n$ matrices $C$ and $D$ are powers of $t$ and every row in $D$ is $\left[ d_{1}, \ldots, d_{n} \right]$. We derive an analogous functional equation:
\begin{align}
	Q_{n}(t; C; \; d_{1}, \ldots, d_{n}) = \mathop{\sum} \limits_{i=1}^{n} {c_{i,1}^{0} c_{i,2}^{1} \ldots c_{i,i}^{i-1} \cdot Q_{n-1}(t; R_{2}(C,D,i); \; d_{1}, \ldots, d_{i-1}, t d_{i} d_{i+1}, d_{i+2}, \ldots, d_{n})}. \tag{FE2341c} \label{FE2341c}
\end{align}
Using this recurrence to compute $Q_{n}(t; \boldsymbol{1}; \; 1 \left[n \text{ times} \right])$ will yield the desired polynomial $f_{n}(t)$. This approach allows us to deal with $n(n+1)/2 + n$ catalytic variables (as opposed to $n(n+1)$ such variables).\\

Additionally, for a fixed $r$, the sequence $s_{n}(2 3 4 1, r)$ can be computed by applying Lemma~\ref{crucial} and the $\text{CHOP}_{r}$ operator as necessary. This has been implemented in the procedure {\tt F2341rN(r,N)} (in the Maple package {\tt F2341}).\\

For example, the Maple call {\tt F2341rN(1,15);} for the first $15$ terms of $s_{n}(2 3 4 1,1)$ produces the sequence:
\begin{gather*}
	0, 0, 0, 1, 11, 87, 625, 4378, 30671, 216883, 1552588, 11257405, 82635707, 613600423, 4604595573
\end{gather*}
and the Maple call {\tt F2341rN(2,15);} for the first $15$ terms of $s_{n}(2 3 4 1,2)$ produces the sequence:
\begin{gather*}
	0, 0, 0, 0, 5, 68, 626, 5038, 38541, 289785, 2172387, 16339840, 123650958, 942437531, 7236542705\\
\end{gather*}

While we do not present the details here, the same methodology can be applied to longer patterns of the form $2 3 \ldots k 1$. Analogous functional equations can be derived and used for enumeration.


\section{Further extensions}\label{multipatt}

\subsection{Tracking inversions}
One of the most commonly studied permutation statistic is the inversion number. The inversion number of a permutation $\pi = \pi_{1} \ldots \pi_{n}$, denoted by $\text{inv}(\pi)$, is the number of pairs $(i,j)$ such that $1 \leq i < j \leq n$ and $\pi_{i} > \pi_{j}$. Equivalently, it is the number of occurrences of the pattern $2 1$ in $\pi$. For a (fixed) pattern $\tau$, define the polynomial
\begin{align}
	g_{n}(t,q) := \mathop{\sum} \limits_{\pi \in \mathcal{S}_{n}} q^{\text{inv}(\pi)} t^{N_{\tau}(\pi)}.
\end{align}
Observe that $g_{n}(t,1)$ is exactly $f_{n}(t)$ from before.\\

Given a permutation $\pi = \pi_{1} \ldots \pi_{n}$, suppose that $\pi_{1} = i$. Then, $\text{inv}(\pi)$ is equal to the number of inversions in $\pi_{2} \ldots \pi_{n}$ plus the number of elements in $\pi_{2}, \ldots, \pi_{n}$ that are less than $i$. For any previously defined functional equation, it is enough to insert a $q^{i-1}$ factor in the summation.\\

For example, if the fixed pattern is $\tau = 1 3 2$, the polynomial $P_{n}$ can be analogously defined as
\begin{align*}
	P_{n}(t,q; x_{1}, \ldots, x_{n}) := \mathop{\sum} \limits_{\pi \in \mathcal{S}_{n}} q^{\text{inv}(\pi)} \cdot \text{weight}_{132}(\pi)
\end{align*}
and the analog to functional equation (\ref{FE132}) would be
\begin{align*}
	P_{n}(t,q; x_{1}, \ldots, x_{n}) = \mathop{\sum} \limits_{i=1}^{n} {q^{i-1} x_{1} x_{2} \ldots x_{i-1} \cdot P_{n-1}(t,q; x_{1}, \ldots, x_{i-1}, t x_{i+1}, \ldots, t x_{n})}. 
\end{align*}
Similarly, the analogous functional equation to (\ref{FE231c}) would be
\begin{align*}
	Q_{n}(t,q; c_{1}, \ldots, c_{n}) = \mathop{\sum} \limits_{i=1}^{n} {q^{i-1} c_{1}^{0} c_{2}^{1} \ldots c_{i}^{i-1} \cdot Q_{n-1}(t,q; c_{1}, \ldots, c_{i-1}, t c_{i} c_{i+1}, c_{i+2}, \ldots, c_{n})}. 
\end{align*}
From here, all the previous computational techniques for quick enumeration still apply.\\

This has been implemented in the procedures {\tt qF123r(n,r,t,q)} and {\tt qF1234r(n,r,t,q)} (in Maple package {\tt FINCR}), {\tt qF132r(n,r,t,q)} and {\tt qF1243r(n,r,t,q)} (in Maple package {\tt FINCRT}),  {\tt qF231r(n,r,t,q)} (in Maple package {\tt F231}), and {\tt qF2341r(n,r,t,q)} (in Maple package {\tt F2341}).

\subsection{Counting multiple patterns in permutations}
In the preceding sections, various functional equations were derived by considering the first term of a typical permutation and deriving a recurrence. Given any collection of patterns where such recurrences can be derived, we can also consider those patterns simultaneously.\\

As an example, consider the patterns $\sigma = 1 2 3$ and $\tau = 1 3 2$. The case of only the pattern $1 2 3$ was done in~\cite{nz:gwilf}. In this setting, the weight of a permutation $\pi = \pi_{1} \ldots \pi_{n}$ is defined by
\begin{align*}
	\text{weight}_{123}(\pi) := t^{N_{123}(\pi)} \mathop{\prod} \limits_{i=1}^{n} {x_{i}^{\# \{ (a,b) \; : \; \pi_{a} = i < \pi_{b},\; 1 \leq a < b \leq n\}}}
\end{align*}
and the corresponding polynomial is
\begin{align*}
	P_{n}(t; x_{1}, \ldots, x_{n}) := \mathop{\sum} \limits_{\pi \in \mathcal{S}_{n}} \text{weight}_{123}(\pi).
\end{align*}
The corresponding functional equation (referred to as the \emph{Noonan-Zeilberger Functional Equation}) is
\begin{align}
	P_{n}(t; x_{1}, \ldots, x_{n}) = \mathop{\sum} \limits_{i=1}^{n} x_{i}^{n-i} P_{n-1}(t; x_{1}, \ldots, x_{i-1}, t x_{i+1}, \ldots, t x_{n}). \tag{NZFE} \label{NZFE}
\end{align}
This can be merged with the analogous quantities for $1 3 2$ as follows. Let $s$ and $t$ be the variables corresponding to $1 2 3$ and $1 3 2$, respectively. Let $x_{1}, \ldots, x_{n}$ and $y_{1}, \ldots, y_{n}$ be two sets of catalytic variables, and define the weight of a permutation $\pi = \pi_{1} \ldots \pi_{n}$ by
\begin{align*}
	\text{weight}(\pi) := s^{N_{123}(\pi)} t^{N_{132}(\pi)} \mathop{\prod} \limits_{i=1}^{n} {x_{i}^{\# \{ (a,b) \; : \; \pi_{a} = i < \pi_{b},\; 1 \leq a < b \leq n\}}} {y_{i}^{\# \{ (a,b) \; : \; \pi_{a} > \pi_{b} = i,\; 1 \leq a < b \leq n\}}}.
\end{align*}
For each $n$, we define the polynomial
\begin{align*}
	P_{n}(s,t; x_{1}, \ldots, x_{n}, y_{1}, \ldots, y_{n}) := \mathop{\sum} \limits_{\pi \in \mathcal{S}_{n}} \text{weight}(\pi)
\end{align*}
and can similarly derive the functional equation
\begin{gather*}
	P_{n}(s,t; x_{1}, \ldots, x_{n}, y_{1}, \ldots, y_{n})  = \\
	\mathop{\sum} \limits_{i=1}^{n} x_{i}^{n-i} y_{1} y_{2} \ldots y_{i-1} P_{n-1}(s,t; x_{1}, \ldots, x_{i-1}, s x_{i+1}, \ldots, s x_{n}, y_{1}, \ldots, y_{i-1}, t y_{i+1}, \ldots, t y_{n}). 
\end{gather*}
The same computational techniques from the prior sections apply here as well.\\

This has been implemented in the Maple package {\tt F123n132}. For example, the Maple call {\tt F123r132sN(2,2,15);} gives the first $15$ terms of the sequence enumerating permutations with $2$ occurrences of $1 2 3$ and $2$ occurrences of $1 3 2$:
\begin{gather*}
	0, 0, 0, 1, 6, 26, 94, 306, 934, 2732, 7752, 21488, 58432, 156288, 411904
\end{gather*}
and the Maple call {\tt F123r132sN(4,2,15);} gives the first $15$ terms of the sequence enumerating permutations with $4$ occurrences of $1 2 3$ and $2$ occurrences of $1 3 2$:
\begin{gather*}
	0, 0, 0, 0, 1, 5, 23, 106, 450, 1740, 6214, 20831, 66427, 203550, 603920
\end{gather*}

Other pairs (or larger sets) of patterns follow similarly, and the analogous $1 2 3 4$ and $1 2 4 3$ case has been implemented in the Maple package {\tt F1234n1243}.\\

Finally, it is possible to consider all length $3$ patterns simultaneously. Only the patterns $1 2 3$, $1 3 2$, and $2 3 1$ were done directly, but analogous functional equations can be derived for $3 2 1$, $3 1 2$, and $2 1 3$. These six functional equations can be combined to count occurrences of all the length $3$ patterns. This has been implemented in the Maple package {\tt FS3}. For example, the Maple call {\tt FS3full(7,[t[1],t[2],t[3],t[4],t[5],t[6]]);} would produce the polynomial
\begin{gather*}
	\mathop{\sum} \limits_{\pi \in \mathcal{S}_{7}} {t_{1}^{N_{123}(\pi)} t_{2}^{N_{132}(\pi)} t_{3}^{N_{213}(\pi)} t_{4}^{N_{231}(\pi)} t_{5}^{N_{312}(\pi)} t_{6}^{N_{321}(\pi)}}
\end{gather*}
in its computed and expanded form.\footnote{The actual output from Maple is too large to include here. We were able to compute up to {\tt FS3full(11,[t[1],t[2],t[3],t[4],t[5],t[6]]);}, which is a $450$ megabyte text file.}


\section{Conclusion}\label{concl}
In this work, we extended and generalized the techniques of~\cite{nz:gwilf} to the pattern families $1 2 \ldots (k-2) (k) (k-1)$ and $2 3 \ldots k 1$. In addition, we showed how this approach could be further extended to handle inversions and more generally, multiple patterns simultaneously. It would be interesting to see what additional patterns this approach can be applied to. Also, while the main results of this paper are enumeration algorithms, they are based off of rigorously derived functional equations. It would be interesting to find out if any additional information can be extracted from these functional equations.\\

Finally, the techniques of this paper allow us to compute many sequences that lead to new conjectures. Let $c_{r,s}(n)$ be the number of length $n$ permutations with exactly $r$ occurrences of $1 2 3 4$ and exactly $s$ occurrences of $1 2 4 3$. We will denote this as $c(n)$ when $r$ and $s$ is clear from context. When $r = s = 0$, it is known that $c(n)$ is exactly the Schr\"{o}der numbers. For fixed $r,s \leq 1$, we are almost certain that $c(n)$ is P-recursive\footnote{This is a special case of the Noonan-Zeilberger Conjecture from~\cite{noonzeil:forbid}.} (the algorithm can compute enough terms to guess a recurrence). On the other hand, for the single pattern case, it is not clear if $s_{n}(1 2 3 4,1)$ is P-recursive~\cite{nz:gwilf}.\\

Based off of empirical evidence, we also believe the following to be true:
\begin{conj}
	Given fixed $r \geq 0$ and $s \geq 0$, let $a(n)$ be the number of length $n$ permutations with exactly $r$ occurrences of $1 2 3$ and $s$ occurrences of $1 3 2$. Then, there exists a polynomial $p(n)$ of degree $r + s$ such that $a(n) = p(n) 2^{n}$ for all $n \geq r + s + 1$.
\end{conj}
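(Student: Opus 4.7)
The plan is to translate the claim into a statement about generating functions and prove it by induction on $r + s$, using the combined functional equation derived in Section~\ref{multipatt}. Setting $F_{r,s}(x) := \sum_{n \geq 0} a(n) x^n$, the conjecture follows once we show
\begin{align*}
F_{r,s}(x) = P_{r,s}(x) + \frac{q_{r,s}(x)}{(1 - 2x)^{r + s + 1}}
\end{align*}
for some polynomials $P_{r,s}$ and $q_{r,s}$ with $\deg P_{r,s} \leq r + s$ and $q_{r,s}(1/2) \neq 0$: partial fractions then give $a(n) = p(n) 2^n$ with $\deg p = r + s$ for all $n > \deg P_{r,s}$. The base case $r = s = 0$ is the classical identity $|\mathcal{S}_{n}(1 2 3, 1 3 2)| = 2^{n - 1}$ for $n \geq 1$, for which $F_{0,0}(x) = (1 - x) / (1 - 2x)$.

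For the inductive step, the combined $1 2 3$ and $1 3 2$ functional equation for the polynomial $P_{n}(s, t; x_{1}, \ldots, x_{n}, y_{1}, \ldots, y_{n})$ from Section~\ref{multipatt}, together with the obvious joint analog of Lemma~\ref{crucial} (collapsing all $s$-powers above $r$ and all $t$-powers above $s$), produces a finite linear scheme computing $a(n)$. After substituting $s = t = 1$ this becomes a finite-dimensional linear recurrence on auxiliary sequences $U_{n}^{(\alpha, \beta)}$ indexed by ``budget so far'' pairs $(\alpha, \beta)$ with $\alpha \leq r$ and $\beta \leq s$. Ordering these states by $\alpha + \beta$ makes the transition matrix block upper triangular: the $(\alpha, \beta)$-block recovers the analogous reduced scheme for parameters $(\alpha, \beta)$ in place of $(r, s)$, and the off-diagonal couplings only flow from lower-budget blocks into higher-budget ones.

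Applying the inductive hypothesis to each lower-budget block $(r', s')$ with $r' + s' < r + s$ gives denominator $(1 - 2x)^{r' + s' + 1}$ for the corresponding generating function, which controls the inhomogeneous driving term of the recurrence for $F_{r,s}$. The self-coupling inside the $(r,s)$-block then multiplies the denominator by exactly one further factor of $(1 - 2x)$, producing the desired $(1 - 2x)^{r + s + 1}$.

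The hardest step will be (a) showing that the transfer matrix's only eigenvalue is $2$, so no extraneous poles appear in $F_{r,s}$, and (b) showing strict attainment $q_{r,s}(1/2) \neq 0$, which fixes the degree of $p$ at exactly $r + s$. Part (a) should follow by strong induction on $r + s$ from the block triangular structure: assuming every strictly smaller block has spectrum $\{2\}$, one analyses the diagonal $(r, s)$-block directly and verifies by an explicit computation on the finite state space that it also has spectrum $\{2\}$. Part (b) is the combinatorially delicate one; a natural approach is to construct an explicit family of $\Omega(n^{r+s} \cdot 2^{n})$ permutations in $\mathcal{S}_{n}$ with exactly $r$ occurrences of $1 2 3$ and exactly $s$ occurrences of $1 3 2$, by starting from a $(1 2 3, 1 3 2)$-avoider of length $n - O(r+s)$ and inserting $O(r+s)$ prescribed ``defect blocks'' in any of $\binom{n - O(r+s)}{r+s}$ possible position tuples. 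Such a lower bound would rule out cancellation of the leading term and complete the induction.
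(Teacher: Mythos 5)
First, a point of order: the statement you are proving is presented in the paper only as a \emph{conjecture}. The author offers no proof whatsoever --- only empirical verification for all $r+s \leq 10$ via the {\tt F123n132} package --- so there is no paper argument to compare yours against, and your proposal must stand on its own as a complete proof. It does not.

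The central gap is your claim that the joint functional equation for $(1 2 3, 1 3 2)$, after applying the analog of Lemma~\ref{crucial} and the $\text{CHOP}$ operators, ``becomes a finite-dimensional linear recurrence on auxiliary sequences $U_{n}^{(\alpha,\beta)}$ indexed by budget pairs.'' It does not. The states of the reduced scheme are quantities of the form $P_{n}(s,t;\, 1\$a_{0}, \ldots, s^{r+1}\$a_{r+1};\, 1\$b_{0}, \ldots, t^{s+1}\$b_{s+1})$, indexed by \emph{compositions} of $n$, so the state space grows polynomially with $n$; this is precisely why the paper's method yields a polynomial-time algorithm rather than a constant-coefficient linear recurrence. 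You cannot collapse the composition data down to a pair $(\alpha,\beta)$ without discarding exactly the information the catalytic variables exist to carry, and you offer no justification for such a collapse. Consequently the block-triangular transfer matrix, the induction on $r+s$, and the ``one extra factor of $(1-2x)$ per level'' mechanism are all unsupported. Moreover, the two steps you yourself flag as hardest --- that the only singularity of $F_{r,s}$ is at $x = 1/2$, and that $q_{r,s}(1/2) \neq 0$ --- are essentially the entire content of the conjecture; deferring them to ``an explicit computation on the finite state space'' (a state space that is not finite) and to an unconstructed family of $\Omega(n^{r+s} 2^{n})$ permutations leaves the conjecture as open as it was. Your base case $a(n) = 2^{n-1}$ for $r = s = 0$ is correct, and the general strategy of proving $F_{r,s}$ rational with denominator a power of $(1-2x)$ is a reasonable line of attack, but as written this is a research plan, not a proof.
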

\noindent This conjecture has been empirically checked for all $r+s \leq 10$. There are some results considering this type of problem, but most such results limit themselves to $s = 0,1$. For example, Robertson~\cite{robertson} derives closed form expressions for $a(n)$ when $r = 0,1$ and $s = 0,1$, and other authors~\cite{RWZ, manvain2} derive more general generating functions for $a(n)$ when $s = 0$. If this general form were shown to hold for arbitrary $r$ and $s$, the {\tt F123n132} package could quickly compute enough terms to find explicit formulas and generating functions.\\

\noindent \textbf{Acknowledgments}: The author owes many thanks to Doron Zeilberger for the helpful discussions on this topic. The author also needs to thank the anonymous referees for their corrections and comments toward improving this article.

\bibliography{GWilf2}{}
\bibliographystyle{plain}

\end{document}